\def\cI{{\mathcal I}}
\def\cJ{{\mathcal J}}
\begin{document}%
\title[Kloosterman paths of prime powers moduli II]{Kloosterman paths of prime powers moduli,  II}
\author[G. Ricotta]{Guillaume Ricotta}
\address{Universit\'{e} de Bordeaux \\
Institut de Math\'{e}matiques de Bordeaux \\
351, cours de la Lib\'{e}ration \\
33405 Talence cedex \\
France}
\email{Guillaume.Ricotta@math.u-bordeaux.fr}
\author[E. Royer]{Emmanuel Royer}
\address{Universit\'e Clermont Auvergne \\
CNRS \\
LMBP \\
Clermont-Ferrand \\
France}
\email{emmanuel.royer@math.cnrs.fr}
\author[I. Shparlinski]{Igor Shparlinski}
\address{The University of New South Wales \\
Sydney NSW 2052 \\
Australia}
\email{igor.shparlinski@unsw.edu.au}
\date{Version of \today} 
\subjclass{11T23, 11L05, 60F17, 60G17, 60G50.}
\keywords{Kloosterman sums, moments, probability in Banach spaces.}
\begin{abstract}
G. Ricotta and E. Royer (2018) have recently proved that the polygonal paths joining the partial sums of the normalized classical Kloosterman sums $S\left(a,b;p^n\right)/p^{n/2}$ converge in law in the Banach space of complex-valued continuous function on $[0,1]$ to an explicit random Fourier series as $(a,b)$ varies over $\left(\mathbb{Z}/p^n\mathbb{Z}\right)^\times\times\left(\mathbb{Z}/p^n\mathbb{Z}\right)^\times$, $p$ tends to infinity among the odd prime numbers and $n\geq 2$ is a fixed integer. This is the analogue of the result obtained by E.~Kowalski and W.~Sawin (2016) in the prime moduli case. The purpose of this work is to prove a convergence law in this Banach space as only $a$ varies over $\left(\mathbb{Z}/p^n\mathbb{Z}\right)^\times$, $p$ tends to infinity among the odd prime numbers and $n\geq 31$ is a fixed integer.
\end{abstract}
%
\maketitle
%
\begin{center}
\textit{In memory of Alexey Zynkin.}
\end{center}
%
\section{Introduction and statement of the results}%
%
\subsection{Background}
Let $p$ be an odd prime number and $n\geq 1$ be an integer. For $a$ and $b$ in $\mathbb{Z}/p^n\mathbb{Z}$, the corresponding normalized Kloosterman sum of modulus $p^n$ is the real number given by
\[
\mathsf{Kl}_{p^n}(a,b)\coloneqq\frac{1}{p^{n/2}}S\left(a,b;p^n\right)=\frac{1}{p^{n/2}}\sum_{\substack{1\leq x\leq p^n \\
p\nmid x}}\
e\left(\frac{ax+b\overline{x}}{p^n}\right), 
\]
where as usual $\overline{x}$ stands for the inverse of $x$ modulo $p^n$ and we  also define $e(z)\coloneqq\exp{(2i\pi z)}$ for any complex number $z$. Recall that its absolute value is less than $2$ by its explicite formula (see \cite[Lemma 4.6]{MR3854900} for instance). For $a$ and $b$ in $\left(\mathbb{Z}/p^n\mathbb{Z}\right)^\times$, the associated partial sums are the $\varphi(p^n)=p^{n-1}(p-1)$ complex numbers
\[
\mathsf{Kl}_{j;p^n}(a,b)\coloneqq\frac{1}{p^{n/2}}\sum_{\substack{1\leq x\leq j \\
p\nmid x}}e\left(\frac{ax+b\overline{x}}{p^n}\right)
\]
for $j$ in $\cJ_p^n\coloneqq\left\{j\in\{1,\dots,p^n\}, p\nmid j\right\}$. If we write \(\cJ_p^n=
\{j_1,\dotsc,j_{\varphi(p^n)}\}\), the corresponding Kloosterman path $\gamma_{p^n}(a,b)$ is defined by %
\[%
\gamma_{p^n}(a,b)=\bigcup_{i=1}^{\varphi(p^n)-1}\left[\mathsf{Kl}_{j_i;p^n}(a,b),\mathsf{Kl}_{j_{i+1};p^n}(a,b)\right]. %
\]
This is the polygonal path obtained by concatenating the closed segments
\[
\left[\mathsf{Kl}_{j_1;p^n}(a,b),\mathsf{Kl}_{j_2;p^n}(a,b)\right]
\]
for $j_1$ and $j_2$ two consecutive indices in $\cJ_{p^n}$. Finally, one defines a continuous map on the interval $[0,1]$
\[
t\mapsto\mathsf{Kl}_{p^n}(t;(a,b))
\]
by parametrizing the path $\gamma_{p^n}(a,b)$, each segment 
\[
\left[\mathsf{Kl}_{j_1;p^n}(a,b),\, \mathsf{Kl}_{j_2;p^n}(a,b)\right]
\] for $j_1$ and $j_2$ two consecutive indices in $\cJ_{p^n}$ being parametrized linearly by an interval of length $1/(\varphi(p^n)-1)$.

The function $(a,b)\mapsto\mathsf{Kl}_{p^n}(\ast;(a,b))$ is viewed as a random variable on the probability space $\left(\mathbb{Z}/p^n\mathbb{Z}\right)^\times\times\left(\mathbb{Z}/p^n\mathbb{Z}\right)^\times$ endowed with the uniform probability measure with values in the Banach space of complex-valued continuous functions on $[0,1]$ endowed with the supremum norm, say $C^0([0,1],\C)$.

Let $\mu$ be the probability measure given by
\[
\mu=\frac{1}{2}\delta_0+\mu_0
\]
for the Dirac measure $\delta_0$ at $0$ and
\[
\mu_0(f)=\frac{1}{2\pi}\int_{x=-2}^2\frac{f(x)\mathrm{d}x}{\sqrt{4-x^2}}
\]
for any real-valued continuous function $f$ on $[-2,2]$.

Let $\left(U_h\right)_{h\in\Z}$ be a sequence of independent identically distributed random variables of probability law $\mu$ and let $\mathsf{Kl}$ be the $C^0([0,1],\C)$-valued random variable defined by
\[
\forall t\in[0,1],\quad \mathsf{Kl}(t)(\ast)=tU_0(\ast)+\sum_{h\in\Z^\ast}\frac{e(ht)-1}{2i\pi h}U_h(\ast)
\]
where the series should be summed with symmetric partial sums. The basic properties of this random Fourier series are given in~\cite[Proposition~3.1]{MR3854900}.

In~\cite{MR3854900}, it has been 
 proved that the sequence of $C^0([0,1],\C)$-valued random variables $\mathsf{Kl}_{p^n}(\ast;(\ast,\ast))$ on $\left(\Z/p^n\Z\right)^\times\times\left(\Z/p^n\Z\right)^\times$ converges in law\footnote{See~\cite[Appendix~A]{MR3854900} for a precise definition of the convergence in law in the Banach space $C^0([0,1],\C)$.} to the $C^0([0,1],\C)$-valued random variable $\mathsf{Kl}$ as $p$ tends to infinity among the prime numbers and $n\geq 2$ is a fixed integer. This is the analogue of the result proved by E.~Kowalski and W.~Sawin in~\cite{KoSa} when $n=1$ where a different random Fourier series occur, the measure $\mu$ being replaced by the Sato-Tate measure.

\subsection{Our results and approach}
The purpose of this work is the following substantial refinement of~\cite{MR3854900}.

\begin{theoint}[Convergence in law]\label{theo_A}
Let $b_0$ be a fixed non-zero integer, $n\geq 31$ be a fixed integer and $p$ be an odd prime number. The sequence of $C^0([0,1],\C)$-valued random variables $\mathsf{Kl}_{p^n}(\ast;(\ast,b_0))$ on $\left(\Z/p^n\Z\right)^\times$ converges in law to the $C^0([0,1],\C)$-valued random variable $\mathsf{Kl}$ as $p$ tends to infinity among the odd prime numbers.
\end{theoint}
This result is not a purely technical problem. This question should be seen as a very challenging one already highlighted as a key open problem in \cite{KoSa}. Note that the case of prime moduli remains open.

The general strategy to prove Theorem~\ref{theo_A} is the one used in~\cite{KoSa} and in~\cite{MR3854900}. It consists of two distinct steps.

First of all, the convergence in the sense of finite distributions\footnote{See~\cite[Appendix~A]{MR3854900} for a precise definition of the convergence in the sense of finite distributions in the Banach space $C^0([0,1],\C)$.} of the sequence of $C^0([0,1],\C)$-valued random variables $\mathsf{Kl}_{p^n}(\ast;(\ast,b_0))$ on $\left(\Z/p^n\Z\right)^\times$ to the $C^0([0,1],\C)$-valued random variable $\mathsf{Kl}$ as $p$ tends to infinity among the odd prime numbers is proved. This result is nothing else than~\cite[Theorem A]{MR3854900} and heavily relies on A.~Weil's version of the Riemann hypothesis in one variable, see~\cite{Lor}. Note that $n\geq 2$ is fixed in this work, although most of our ingredients work 
for arbitrary $n$ as well. Indeed, the only place where $n$ has to be a fixed integer is when using \cite[Theorem A]{MR3854900} page \pageref{sec_proofs}.

Then, to deduce the convergence in law from the convergence of finite distributions, one has to prove that the sequence of $C^0([0,1],\C)$-valued random variables $\mathsf{Kl}_{p^n}(\ast;(\ast,b_0))$ on $\left(\Z/p^n\Z\right)^\times$ is tight, a weak-compactness property which takes into account that $C^0([0,1],\C)$ is infinite-dimensional. See~\cite[Appendix A]{MR3854900} for a precise definition. This is the main input in this work.
\begin{theoint}[Tightness property]\label{theo_B}
Let $b_0$ be a fixed non-zero integer, $n\geq 31$ be a fixed integer and $p$ be an odd prime number. The sequence of $C^0([0,1],\C)$-valued random variables $\mathsf{Kl}_{p^n}(\ast;(\ast,b_0))$ on $\left(\Z/p^n\Z\right)^\times$ is tight as $p$ tends to infinity among the odd prime numbers.
\end{theoint}
The proof of this tightness property in Theorem~\ref{theo_B} follows the strategy outlined in~\cite{KoSa}. The core of the proof is a uniform estimate of the shape
\[
\frac{1}{p^{n/2}}\sideset{}{^\times}\sum_{x\in \cI}e\left(\frac{ax+b\overline{x}}{p^n}\right)\ll p^{-\delta}
\]
for some $\delta>0$ when $\cI$ is an interval of $\Z$ of length close to $p^{n/2}$ and $a$ and $b$ are some integers coprime with $p$. See~\cite[Remark~3.3]{KoSa} and~\cite[Page~52]{Ko} for a discussion on such issues in the prime moduli case. It turns out that for $n$ large enough, such estimate follows from the work contained in~\cite{MR02681422}.

Finally, one can mention that it seems quite natural to consider the same questions in the regime
$p$ a fixed prime number and $n\geq 2$ tends to infinity, or even in a more complicated regime 
when both $p$ and $n$ vary. This problem, both theoretically and numerically, seems to be of completely different nature.

\subsection{Organization of the paper}
The explicit description of the Kloosterman paths and some required results proved in~\cite{MR3854900} are recalled in Section~\ref{sec_back}. Section~\ref{sec_koro} deals with Korolev's estimate for short Kloosterman sums of powerful moduli. The tightness condition is proved in Section~\ref{sec_tight}. 
Theorems~\ref{theo_A} and~\ref{theo_B} are proved in Section~\ref{sec_proofs}.

\subsection{Notation}
The main parameter in this paper is an odd prime $p$, which tends to infinity. Thus, if $f$ and $g$ are some $\C$-valued function of one real variable then the notations 
\[
f(p)=O_A(g(p))\quad \text{or} \quad  f(p)\ll_A g(p) \quad \text{or} \quad g(p) \gg_A f(p)
\] 
mean that $\abs{f(p)}$ is smaller than a constant, which only depends on $A$, times $g(p)$ at least for $p$ large enough. 
\par
Throughtout the paper, $n\geq 2$ is a fixed integer and $b_0$ is a fixed non-zero integer.
\par
For any real number $x$ and integer $k$, 
$$
e_k(x)\coloneqq\exp{\left(\frac{2i\pi x}{k}\right)}.
$$
\par
For any finite set $S$, $\vert S\vert$ stands for its cardinality. 
\par
We will denote by $\varepsilon$ an absolute positive constant whose definition may change from one line to the next one.
\par
The notation $\sideset{}{^\times}\sum$ means that the summation is over a set of integers coprime with $p$.
%
\section{Background on the Kloosterman path}\label{sec_back}%
\subsection{Explicit description of the Kloosterman path}%
Let us recall the construction of the Kloosterman path $\gamma_{p^n}(a,b_0)$ given in~\cite[Section~2]{MR3854900} for $a$ in $\left(\mathbb{Z}/p^n\mathbb{Z}\right)^\times$. %

We enumerate the partial Kloosterman sums and define \(z_j((a,b_0);p^n)\) to be the \(j\)-th term of \(\left(\mathsf{Kl}_{j;p^n}(a,b_0)\right)_{j\in \cJ_p^n}\). More explicitly, we organise the partial Kloosterman sums in \(p^{n-1}\) blocks each of them containing \(p-1\) successive sums. For $1\leq k\leq p^{n-1}$, the \(k\)-th block contains 
\[\mathsf{Kl}_{(k-1)p+1;p^n}(a,b_0),\dotsc,\mathsf{Kl}_{kp-1;p^n}(a,b_0).\]
 These sums are numbered by defining %
\[%
z_{(k-1)(p-1)+\ell}((a,b_0);p^n)=\mathsf{Kl}_{(k-1)p+\ell;p^n}(a,b_0)\qquad (1\leq \ell\leq p-1). %
\]
It implies that the enumeration is given by %
$$
z_{j}((a,b_0);p^n)=\mathsf{Kl}_{j+\left\lfloor\frac{j-1}{p-1}\right\rfloor;p^n}(a,b_0)\qquad (1\leq j<\varphi(p^n)). 
$$

For any \(j\in\{1,\dotsc,\varphi(p^n)-1\}\), we parametrize the open segment \[\left(z_{j}((a,b_0);p^n),z_{j+1}((a,b_0);p^n)\right]\] and obtain the parametrization of \(\gamma_{p^n}(a,b_0)\) given by %
\begin{equation}\label{eq_21}
\begin{split}
\forall t\in[0,1],\quad & \mathsf{Kl}_{p^n}(t;(a,b_0))\\
& =\alpha_j((a,b_0);p^n)\left(t-\frac{j-1}{\varphi(p^n)-1}\right)+z_{j}((a,b_0);p^n) %
\end{split}
\end{equation}
with %
\[%
\alpha_j((a,b_0);p^n)=(\varphi(p^n)-1)\left(z_{j+1}((a,b_0);p^n)-z_{j}((a,b_0);p^n)\right) %
\]
and 
\[%
j=\left\lceil\left(\varphi(p^n)-1\right)t\right\rceil\quad\text{namely}\quad \frac{j-1}{\varphi\left(p^n\right)-1}<t\leq\frac{j}{\varphi\left(p^n\right)-1}.  %
\]
Since \(\left(z_{j}((a,b_0);p^n),z_{j+1}((a,b_0);p^n)\right]\) has length \(p^{-n/2}\), we have %
\begin{equation}\label{eq_tec_0}
\abs{\alpha_j((a,b_0);p^n)}\leq\frac{\varphi(p^n)-1}{p^{n/2}}.
\end{equation}
\subsection{Approximation of the Kloosterman path}%
For $a$ in $\left(\mathbb{Z}/p^n\mathbb{Z}\right)^\times$, let us define a step function on the segment $[0,1]$ by, for any $k\in\left\{1,\dots,p^{n-1}\right\}$,
\begin{equation}\label{eq_Kltilde}
\forall t\in\left(\frac{k-1}{p^{n-1}},\frac{k}{p^{n-1}}\right],\quad\widetilde{\mathsf{Kl}_{p^n}}(t;(a,b_0))\coloneqq\frac{1}{p^{n/2}}\sum_{1\leq x\leq x_k(t)}^{\mathstrut\quad\times}e_{p^n}\left(ax+b_0\overline{x}\right).
\end{equation}
where
\[
x_k(t)\coloneqq\varphi(p^n)t+k-1.
\]
In addition, let us define for $h$ in $\mathbb{Z}/p^n\mathbb{Z}$ and $1\leq k\leq p^{n-1}$,
\begin{equation}\label{eq_Fourier}
\forall t\in\left(\frac{k-1}{p^{n-1}},\frac{k}{p^{n-1}}\right],\quad\alpha_{p^n}(h;t)\coloneqq\frac{1}{p^{n/2}}\sum_{1\leq x\leq x_k(t)}e_{p^n}\left(hx\right).
\end{equation}
These coefficients are nothing else than the discrete Fourier coefficients of the finite union of intervals given by $1\leq x\leq x_k(t)$ with $(p,x)=1$ for $1\leq k\leq p^{n-1}$.

The sequence of random variables $\widetilde{\mathsf{Kl}_{p^n}}(\ast;(\ast,b_0))$ on $\left(\Z/p^n\Z\right)^\times$ is an approximation of the sequence of $C^0([0,1],\C)$-valued random variables $\mathsf{Kl}_{p^n}(\ast;(\ast,b_0))$ on $\left(\Z/p^n\Z\right)^\times$ in the sense that
\begin{equation}\label{eq_bemol}
\left\vert\mathsf{Kl}_{p^n}(t;(a,b_0))-\widetilde{\mathsf{Kl}_{p^n}}(t;(a,b_0))\right\vert\leq\frac{6}{p^{n/2}}
\end{equation}
for any $a$ in $\left(\mathbb{Z}/p^n\mathbb{Z}\right)^\times$ and any $t\in[0,1]$. 
See~\cite[Equation~(2.3)]{MR3854900}.

Finally, by~\cite[Lemma~4.2]{MR3854900} and~\cite[Remark~4.5]{MR3854900}, 
\begin{equation}\label{eq_bemol2}
\widetilde{\mathsf{Kl}_{p^n}}(t;(a,b_0))\ll\log{\left(p^n\right)}.
\end{equation}
Note that~\eqref{eq_bemol} and~\eqref{eq_bemol2} are essentially a consequence of the very classical completion method.
\section{On Korolev's estimate for short Kloosterman sums of powerful moduli}\label{sec_koro}%
A key ingredient in this work is the following particular case of an estimate proved by M.A.~Korolev for short Kloosterman sums of powerful moduli, see \cite[Theorem~1]{MR02681422}.

\begin{proposition}[Korolev's estimate~\cite{MR02681422}]\label{propo_koro}
Let $a$ $b$ and $c$ be integers and $N$ be a positive integer. If
\begin{equation}\label{eq_koro_assum}
\max\left(p^{15}, \exp{\left(\gamma_1\left(\log{\left(p^n\right)}\right)^{2/3}\right)}\right)\leq N\leq p^{n/2}
\end{equation}
then
\[
\left\vert\hspace{0.2cm}\sideset{}{^\times}\sum_{c<x\leq c+N}e_{p^n}\left(ax+b\overline{x}\right)\right\vert\leq N\exp{\left(-\gamma_2\frac{\left(\log{(N)}\right)^3}{\left(\log{\left(p^n\right)}\right)^2}\right)}
\]
where $\gamma_1=900$ and $\gamma_2=160^{-4}$.
\end{proposition}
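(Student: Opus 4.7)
The plan is to derive this as a direct specialisation of Korolev's Theorem~1 in~\cite{MR02681422}. That result provides a nontrivial upper bound for short incomplete Kloosterman sums $\sideset{}{^\times}\sum_{c<x\leq c+N} e_{p^n}(ax+b\overline{x})$ under suitable size constraints linking the length $N$, the prime $p$ and the exponent $n$. The first step is simply to recall the statement of Korolev's theorem verbatim and identify his parameters with ours: the modulus in his paper corresponds to $p^n$, the frequency pair is $(a,b)$ with $b$ coprime to $p$, and the shift is $c$.

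Next, I would verify that the hypothesis~\eqref{eq_koro_assum} places us inside the applicability range of his bound. The two lower thresholds on $N$ serve distinct roles: the polynomial threshold $p^{15}$ ensures that Korolev's completion and $p$-adic shifting arguments leave a usable saving at the level of the modulus $p^n$, while the subexponential threshold $\exp(\gamma_1(\log p^n)^{2/3})$ with $\gamma_1=900$ guarantees that the cube of $\log N$ sufficiently dominates the square of $\log p^n$, so that the exponent $-\gamma_2(\log N)^3/(\log p^n)^2$ in the final bound is at least mildly negative. The upper bound $N\leq p^{n/2}$ restricts us to the "short" regime where Korolev's method produces a nontrivial gain over the trivial bound $N$.

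The final step is to transcribe the quantitative form of Korolev's estimate, which has exactly the shape stated, and to read off the numerical values $\gamma_1=900$ and $\gamma_2=160^{-4}$ from his proof. Since Korolev's theorem is stated with slightly different normalisations (in particular, his conclusion may be phrased with a different constant in the exponent, or his hypothesis may be expressed via an inequality relating $\log N$ and $\log p^n$ rather than the explicit threshold form used here), a short computation is needed to package his statement into the clean form given in Proposition~\ref{propo_koro}.

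The only genuine obstacle here is bookkeeping: aligning notation, checking that the coprimality hypotheses on $(a,b,c)$ in Korolev's theorem are compatible with ours (coprimality of $b$ with $p$ being the essential point, since it is what produces a nontrivial critical point in the $p$-adic stationary phase argument underlying Korolev's proof), and ensuring that no extraneous logarithmic or constant factor is absorbed into the wrong part of the bound. No new argument is introduced here; the proposition is explicitly flagged as a particular case, and the verification reduces to aligning notation and constants.
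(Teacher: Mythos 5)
The paper offers no proof of this proposition: it is presented verbatim as a particular case of Korolev's Theorem~1 in~\cite{MR02681422}, which is exactly the specialisation-and-bookkeeping route you describe, so your approach matches the paper's. Your remark that coprimality of $b$ with $p$ is implicitly required is a fair observation (the paper's hypothesis ``let $a$, $b$ and $c$ be integers'' is slightly loose on this point), but it is harmless in context since the only application takes $b=b_0$ a fixed non-zero integer with $p\to\infty$.
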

\begin{corollary}\label{propo_koro_2}
Let $a$ $b$ and $c$ be integers and $N$ be a positive integer. If $n\geq 31$ then
\[
\left\vert\hspace{0.2cm}\sideset{}{^\times}\sum_{c<x\leq c+N}e_{p^n}\left(ax+b\overline{x}\right)\right\vert\leq 4N\exp{\left(-\gamma_2\frac{\left(\log{(N)}\right)^3}{\left(\log{\left(p^n\right)}\right)^2}\right)}.
\]
\end{corollary}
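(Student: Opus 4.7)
The plan is a clean case split on the size of $N$, with the factor $4$ in the statement serving precisely to absorb a trivial bound in the regime below the Korolev threshold. The hypothesis $n\geq 31$ is what makes the two regimes fit together: it forces $p^{n/2}>p^{15}$, so the window $[p^{15},p^{n/2}]$ in which Proposition~\ref{propo_koro} is actually usable is non-empty.

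I would split as follows. First, if $N\geq \max(p^{15},\exp(\gamma_1(\log p^n)^{2/3}))$ and $N\leq p^{n/2}$, Proposition~\ref{propo_koro} applies verbatim and delivers the even stronger bound $N\exp(-\gamma_2(\log N)^3/(\log p^n)^2)$, so the factor $4$ is a free luxury. Second, if $N\leq \exp(\gamma_1(\log p^n)^{2/3})$, I would use the trivial bound: each exponential has modulus $1$ and there are at most $N$ terms, hence the sum is bounded by $N$ in absolute value. The target bound $4N\exp(-\gamma_2(\log N)^3/(\log p^n)^2)$ then dominates $N$ exactly when $\exp(-\gamma_2(\log N)^3/(\log p^n)^2)\geq 1/4$, i.e.~$\gamma_2(\log N)^3\leq (\log 4)(\log p^n)^2$. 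Since $\log N \leq \gamma_1(\log p^n)^{2/3}$ in this regime, one has $(\log N)^3\leq \gamma_1^3(\log p^n)^2$, and everything reduces to the absolute numerical inequality $\gamma_1^3\gamma_2\leq \log 4$.

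The main (and essentially only) step is therefore this numerical verification, namely
\[
\gamma_1^3\gamma_2 \;=\; \frac{900^3}{160^4} \;=\; \frac{729\,000\,000}{655\,360\,000} \;\approx\; 1.1124 \;<\; 1.3863 \;\approx\; \log 4.
\]
The small margin is instructive: it is exactly what forces the constant in the statement to be $4$ rather than something smaller (for instance, a constant of $3$ would already fail, since $\log 3 \approx 1.0986 < 1.1124$). Residual edge cases — $N>p^{n/2}$, or the extreme regime in which $p^{15}$ rather than the exponential dominates the Korolev threshold — can be handled by cutting the summation interval into consecutive pieces of length at most $\lfloor p^{n/2}\rfloor$ and applying the two cases above to each piece, the resulting constant loss being easily absorbed in the $4$.
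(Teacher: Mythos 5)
Your two main cases reproduce the paper's proof exactly: when hypothesis \eqref{eq_koro_assum} holds one quotes Proposition~\ref{propo_koro} directly, and otherwise one plays the trivial bound $N$ against the observation that $\log N\leq\gamma_1\left(\log p^n\right)^{2/3}$ forces $\exp\left(-\gamma_2(\log N)^3/(\log p^n)^2\right)\geq\exp\left(-\gamma_2\gamma_1^3\right)\geq 1/4$. The paper writes this as $N=\exp\left(\gamma_2\gamma_1^3\right)\exp\left(-\gamma_2\gamma_1^3\right)N\leq 4\exp\left(-\gamma_2\gamma_1^3\right)N$, which is your numerical inequality $\gamma_1^3\gamma_2\approx 1.112<\log 4$ in disguise; your remark about the constant $3$ failing is a correct and pleasant observation. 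So the heart of your argument is correct and is the intended one.

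The place where you go beyond the paper is the only place where something breaks: your patch for the ``residual edge cases'' does not work as described. First, in the regime $\exp\left(\gamma_1(\log p^n)^{2/3}\right)\leq N<p^{15}$ --- non-empty precisely when $p^{15}$ dominates the threshold, i.e.\ when $\log p>216000\,n^2$ --- the interval already has length below $p^{n/2}$, so cutting into pieces of length at most $p^{n/2}$ changes nothing and neither of your two cases applies; no argument is actually offered for this range. Second, for $N\gg p^{n/2}$, summing the claimed bound over pieces of lengths $N_i\leq p^{n/2}$ yields $4\sum_iN_i\exp\left(-\gamma_2(\log N_i)^3/(\log p^n)^2\right)$, and since $\log N_i\leq\frac{n}{2}\log p$ while $\log N$ may be far larger, each per-piece exponential factor exceeds the target factor $\exp\left(-\gamma_2(\log N)^3/(\log p^n)^2\right)$ by an amount that grows with $N$: the loss is not a bounded constant and cannot be absorbed into the $4$. (Indeed, for $a\equiv b\equiv 0$ and $N$ a large power of $p^n$ the stated inequality is simply false, so no subdivision trick can rescue the unrestricted statement.) To be fair, the paper's own proof has the same blind spots --- it silently reduces to $N<\min\left(\exp\left(\gamma_1(\log p^n)^{2/3}\right),p^{n/2}\right)$, which is not the full negation of \eqref{eq_koro_assum} --- and the corollary is only ever invoked with $N\leq p^{n/2}$, so nothing downstream is affected; but your closing sentence should either be deleted or replaced by an explicit restriction to $N\leq p^{n/2}$.
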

\begin{proof}[\proofname{} of Corollary~\ref{propo_koro_2}]%
By Proposition \ref{propo_koro}, one can assume that
\begin{equation*}
N<\min{\left(\exp{\left(\gamma_1\left(\log{\left(p^n\right)}\right)^{2/3}\right)},p^{n/2}\right)},
\end{equation*}
which implies that
\begin{equation*}
\exp{\left(-\gamma_2\gamma_1^3\right)}\leq\exp{\left(-\gamma_2\frac{\left(\log{(N)}\right)^3}{\left(\log{\left(p^n\right)}\right)^2}\right)}.
\end{equation*}
Trivially, one gets
\begin{align*}
\left\vert\hspace{0.2cm}\sideset{}{^\times}\sum_{c<x\leq c+N}e_{p^n}\left(ax+b\overline{x}\right)\right\vert & \leq N=\exp{\left(\gamma_2\gamma_1^3\right)}\exp{\left(-\gamma_2\gamma_1^3\right)}N \\
& \leq 4\exp{\left(-\gamma_2\gamma_1^3\right)}N \\
& \leq 4\exp{\left(-\gamma_2\frac{\left(\log{(N)}\right)^3}{\left(\log{\left(p^n\right)}\right)^2}\right)}.
\end{align*}
\end{proof}
\begin{corollary}\label{coro_koro}
Let $a$ and $b$ be some integers and
\begin{equation}\label{eq_delta}
0<\delta\leq\min{\left(\gamma_2n/16,n/2-15\right)}.
\end{equation}
If $n\geq 31$ then for any interval $\cI$ of $\Z$ whose length satisfies
\[
p^{n/2-\delta}\leq\abs{\cI}\leq p^{n/2+\delta},
\]
one has
\[
\frac{1}{p^{n/2}}\sideset{}{^\times}\sum_{x\in \cI}e_{p^n}\left(ax+b\overline{x}\right)\ll\left(\frac{1}{p^n}\right)^{\delta/n}.
\]
\end{corollary}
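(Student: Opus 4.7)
The plan is to apply Corollary~\ref{propo_koro_2} directly to the interval $\cI$ and then carefully balance the exponents. Writing $N=\abs{\cI}$, the corollary would give
\[
\frac{1}{p^{n/2}}\left\vert\sideset{}{^\times}\sum_{x\in\cI}e_{p^n}(ax+b\overline{x})\right\vert\leq\frac{4N}{p^{n/2}}\exp\left(-\gamma_2\frac{(\log N)^3}{(\log p^n)^2}\right).
\]
I would then parametrize by $\beta\coloneqq\log N/\log p$, so that the hypothesis $p^{n/2-\delta}\leq N\leq p^{n/2+\delta}$ translates to $\beta\in[n/2-\delta,\,n/2+\delta]$. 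The right-hand side rewrites as $4\,p^{F(\beta)}$ with
\[
F(\beta)=\beta-\frac{n}{2}-\gamma_2\,\frac{\beta^3}{n^2},
\]
and the desired conclusion reduces to showing $F(\beta)\leq-\delta$ uniformly on $[n/2-\delta,\,n/2+\delta]$.

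A short calculus check would finish the argument. The derivative $F'(\beta)=1-3\gamma_2\beta^2/n^2$ is positive on our range since $\beta\approx n/2$ is many orders of magnitude below $n/\sqrt{3\gamma_2}$ given the minuscule value $\gamma_2=160^{-4}$. Hence $F$ is increasing and its maximum is taken at the right endpoint $\beta=n/2+\delta$, where the desired inequality reduces to $\gamma_2(n/2+\delta)^3\geq 2\delta n^2$. Since $(n/2+\delta)^3\geq(n/2)^3=n^3/8$, this is implied by $\gamma_2 n/16\geq\delta$, which is exactly the first part of the hypothesis~\eqref{eq_delta}.

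Honestly there is no genuine obstacle here: the constant $\gamma_2/16$ in the definition of $\delta$ has been engineered precisely so that the cubic-exponential saving of Korolev's bound just beats the polynomial loss $N/p^{n/2}\leq p^{\delta}$ in the extremal case $N=p^{n/2+\delta}$. I expect the secondary condition $\delta\leq n/2-15$ not to be actively needed: it is automatic given $\delta\leq\gamma_2 n/16$ and the standing assumption $n\geq 31$, and Corollary~\ref{propo_koro_2} is a uniform estimate valid for every positive integer $N$ regardless of whether Korolev's original threshold $N\geq p^{15}$ is met. The only place where some care is required is in checking that $F$ is monotonic (so that the worst case is at the upper endpoint rather than the lower one), but this is transparent once the parametrization is in place.
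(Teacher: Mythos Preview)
Your calculus is correct and the parametrization by $\beta=\log N/\log p$ is a clean way to package the exponents. The gap is that you invoke Corollary~\ref{propo_koro_2} for lengths $N$ up to $p^{n/2+\delta}$, hence in particular for $N>p^{n/2}$. Although that corollary is \emph{stated} with no upper restriction on $N$, its proof only covers $N\le p^{n/2}$: it appeals to Proposition~\ref{propo_koro} (Korolev's estimate), whose hypothesis~\eqref{eq_koro_assum} explicitly requires $N\le p^{n/2}$, and then disposes of the remaining small-$N$ range by the trivial bound. Nothing there addresses $N>p^{n/2}$; and in fact the quantity $4N\exp\!\bigl(-\gamma_2(\log N)^3/(\log p^n)^2\bigr)$ is, for $N>p^{n/2}$, strictly smaller than what one gets by cutting into blocks of length $p^{n/2}$ and summing, so it is not obvious that the stated inequality even holds in that range.

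This is exactly why the paper's own proof of Corollary~\ref{coro_koro} treats the ranges $N\le p^{n/2}$ and $N>p^{n/2}$ separately: in the second range it decomposes $\cI$ into $\lceil N/p^{n/2}\rceil-1$ full blocks of length $p^{n/2}$ (each handled by Proposition~\ref{propo_koro}, contributing $(1/p^n)^{\gamma_2/8}$) plus one leftover block of length at most $p^{n/2}$ (handled either trivially, if shorter than $p^{n/2-\delta}$, or again by Corollary~\ref{propo_koro_2}). Your argument is missing precisely this splitting step in the band $p^{n/2}<N\le p^{n/2+\delta}$. Once it is supplied, the exponent balancing you outline (or the paper's equivalent inequalities $\delta/n\le\gamma_2/8-\delta/n\le\gamma_2((n/2-\delta)/n)^3$) finishes the proof. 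Your side remark that $\delta\le n/2-15$ is already forced by $\delta\le\gamma_2 n/16$ when $n\ge 31$ is correct; in the paper that second condition is used to guarantee $n/2-\delta\ge 15$, i.e.\ the lower threshold $N\ge p^{15}$ in~\eqref{eq_koro_assum}.
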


\begin{proof}[\proofname{} of Corollary~\ref{coro_koro}]%
Let us denote by $N$ the length of $\cI=(c,c+N]$.

If $p^{n/2-\delta}\leq N\leq p^{n/2}$ then
\begin{equation}\label{eq_small}
\left\vert\frac{1}{p^{n/2}}\sideset{}{^\times}\sum_{x\in \cI}e_{p^n}\left(ax+b\overline{x}\right)\right\vert\leq 4\frac{N}{p^{n/2}}\exp{\left(-\gamma_2\frac{\left(\log{(N)}\right)^3}{\left(\log{\left(p^n\right)}\right)^2}\right)}\leq 4\left(\frac{1}{p^n}\right)^{\gamma_2\left(\frac{n/2-\delta}{n}\right)^3}
\end{equation}
by Corollary~\ref{propo_koro_2}. Note that~\eqref{eq_koro_assum} is satisfied since $15\leq n/2-\delta$.

Let us assume from now on that $N>p^{n/2}$ and let us denote by $k$ the ceiling part of $N/p^{n/2}$. One can decompose the interval $\cI$ into a disjoint union of the $k-1$ intervals
\[
\cI_\ell\coloneqq\left(c+(\ell-1)p^{n/2},c+\ell p^{n/2}\right], \quad 1\leq\ell\leq k-1
\] 
of lengths $p^{n/2}$ and of the interval
\[
\cI_k\coloneqq\left(c+(k-1)p^{n/2},c+N\right]
\]
of length $0<N-(k-1)p^{n/2}\leq p^{n/2}$. For any $1\leq\ell\leq k-1$,
\begin{equation}\label{eq_estiml}
\left\vert\frac{1}{p^{n/2}}\sideset{}{^\times}\sum_{x\in \cI_\ell}e_{p^n}\left(ax+b\overline{x}\right)\right\vert\leq\left(\frac{1}{p^n}\right)^{\gamma_2/8}
\end{equation}
by Proposition~\ref{propo_koro}. Note that~\eqref{eq_koro_assum} is satisfied since $15\leq n/2$.

Let us deal with the last interval $\cI_k$. If $N-(k-1)p^{n/2}<p^{n/2-\delta}$ then a trivial estimate leads to
\begin{equation}\label{eq_estim_k1}
\left\vert\frac{1}{p^{n/2}}\sideset{}{^\times}\sum_{x\in \cI_k} e_{p^n}\left(ax+b\overline{x}\right)\right\vert\leq\left(\frac{1}{p^n}\right)^{\delta/n}
\end{equation}
whereas if $p^{n/2-\delta}\leq N-(k-1)p^{n/2}\leq p^{n/2}$ then
\begin{equation}\label{eq_estim_k2}
\left\vert\frac{1}{p^{n/2}}\sideset{}{^\times}\sum_{x\in \cI_k} e_{p^n}\left(ax+b\overline{x}\right)\right\vert\leq 4\left(\frac{1}{p^n}\right)^{\gamma_2\left(\frac{n/2-\delta}{n}\right)^3}.
\end{equation}
Altogether, if $N>p^{n/2}$ then
\begin{equation}\label{eq_large}
\left\vert\frac{1}{p^{n/2}}\sideset{}{^\times}\sum_{x\in \cI} e_{p^n}\left(ax+\overline{x}\right)\right\vert\leq \left(\frac{1}{p^n}\right)^{\gamma_2/8-\delta/n}+4\left(\frac{1}{p^n}\right)^{\gamma_2\left(\frac{n/2-\delta}{n}\right)^3}+\left(\frac{1}{p^n}\right)^{\delta/n}.
\end{equation}
by \eqref{eq_estiml}, \eqref{eq_estim_k1} and \eqref{eq_estim_k2}.

By \eqref{eq_small} and \eqref{eq_large}, one gets
\begin{equation*}
\left\vert\frac{1}{p^{n/2}}\sideset{}{^\times}\sum_{x\in \cI} e_{p^n}\left(ax+\overline{x}\right)\right\vert\leq \left(\frac{1}{p^n}\right)^{\gamma_2/8-\delta/n}+4\left(\frac{1}{p^n}\right)^{\gamma_2\left(\frac{n/2-\delta}{n}\right)^3}+\left(\frac{1}{p^n}\right)^{\delta/n}\ll\left(\frac{1}{p^n}\right)^{\delta/n}
\end{equation*}
since a simple computation ensures that
\[
\frac{\delta}{n}\leq\frac{\gamma_2}{8}-\frac{\delta}{n}\leq\gamma_2\left(\frac{n/2-\delta}{n}\right)^3
\]
by \eqref{eq_delta}.
\end{proof}
\section{On the tightness property via Kolmogorov's criterion}\label{sec_tight}%
The goal of this section is to prove the following proposition.
\begin{proposition}[Tightness property]\label{propo_tightne2}
Let $n\geq 31$ be a fixed integer and $b_0$ be a fixed non-zero integer. There exists $\alpha>0$ depending only on $n$ and $\beta>0$ depending only on $\alpha$ and $n$ such that
\[
\frac{1}{\varphi\left(p^n\right)}\sum_{a\in\left(\Z/p^n\Z\right)^\times}\left\vert\mathsf{Kl}_{p^n}(t;(a,b_0))-\mathsf{Kl}_{p^n}(s;(a,b_0))\right\vert^\alpha\ll_{n} (t-s)^{1+\beta}
\]
for any $0\leq s, t\leq 1$. 
\end{proposition}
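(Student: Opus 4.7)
The plan is to establish the Kolmogorov--Chentsov type estimate
\[
\frac{1}{\varphi(p^n)}\sum_{a\in(\Z/p^n\Z)^\times}\left|X_t(a)-X_s(a)\right|^{2k}\ll_{n,k}\Delta^k
\]
for some fixed integer $k\geq 2$, and then to conclude by taking $\alpha = 2k$ and $\beta = k-1$. As a first reduction, by \eqref{eq_bemol} one may replace $\mathsf{Kl}_{p^n}$ by the step function $\widetilde{\mathsf{Kl}_{p^n}}$; the resulting pointwise error $O(p^{-n/2})$ contributes at most $O(p^{-n\alpha/2})$ to the $\alpha$-th moment, which is negligible against $\Delta^{1+\beta}$ whenever $\alpha\geq 2(1+\beta)$.

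For $s<t$ lying in a common block $((\ell-1)/p^{n-1},\ell/p^{n-1}]$, equation \eqref{eq_Kltilde} expresses the increment
\[
X_t(a)-X_s(a)\coloneqq\widetilde{\mathsf{Kl}_{p^n}}(t;(a,b_0))-\widetilde{\mathsf{Kl}_{p^n}}(s;(a,b_0))
\]
as the normalized Kloosterman sum $p^{-n/2}\sideset{}{^\times}\sum_{x\in\mathcal{I}}e_{p^n}(ax+b_0\bar{x})$ over an interval $\mathcal{I}\subset\Z$ whose length $|\mathcal{I}|=\varphi(p^n)(t-s)$ is of order $p^n\Delta$, where $\Delta\coloneqq t-s$. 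When $s$ and $t$ fall in distinct blocks, the increment splits into a bounded number of such sums and it suffices to bound each.

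I would then expand the $2k$-th power and execute the inner summation over $a\in(\Z/p^n\Z)^\times$ using the orthogonality of additive characters, producing the Ramanujan sum $R_{p^n}\bigl(\sum_{i=1}^k x_i-\sum_{j=1}^k y_j\bigr)$, which vanishes unless $p^{n-1}$ divides its argument. The \emph{diagonal contribution}, where $\{x_i\}_{1\leq i\leq k}$ and $\{y_j\}_{1\leq j\leq k}$ coincide as multisets in $\mathcal{I}^\times$, yields the main term of order $k!\,\Delta^k$. The \emph{off-diagonal contributions} are then where Corollary~\ref{coro_koro} enters: after freezing $2k-1$ of the variables, the remaining one-variable sum takes the form $\sideset{}{^\times}\sum_{u\in\mathcal{J}}e_{p^n}(\widetilde{a}u+\widetilde{b}\bar{u})$ for some $\widetilde{a},\widetilde{b}$ coprime to $p$ and an interval $\mathcal{J}\subset\Z$; applying Corollary~\ref{coro_koro} (and its internal splitting when $|\mathcal{J}|$ must be brought into the admissible range $[p^{n/2-\delta},p^{n/2+\delta}]$) yields a gain of $p^{-\delta'}$ uniformly in the frozen variables, for some $\delta'>0$ depending only on $n$. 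A careful combinatorial accounting then shows that the total off-diagonal contribution is $O_{k,n}(\Delta^k p^{-\delta'})$, dominated by the diagonal, proving the bound with $k=2$ (hence $\alpha=4$, $\beta=1$).

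The main obstacle is precisely the off-diagonal estimation in the last step: one must select, for each off-diagonal shape and each range of $\Delta\in(0,1]$, a variable whose residual summation interval has length in Korolev's admissible range after an appropriate sub-splitting, with a gain uniform in the frozen variables. The restriction $n\geq 31$ of \eqref{eq_delta} enters both in ensuring $\delta>0$ and in making the admissible range $[p^{n/2-\delta},p^{n/2+\delta}]$ non-empty, which is essential to handle uniformly every regime of $\Delta$.
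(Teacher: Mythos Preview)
Your off-diagonal step does not work as written. Once you have summed over $a$ and obtained the Ramanujan weight $R_{p^n}\bigl(\sum x_i-\sum y_j\bigr)$, freezing $2k-1$ variables does \emph{not} leave a sum of the shape $\sideset{}{^\times}\sum_{u\in\cJ}e_{p^n}(\widetilde a\,u+\widetilde b\,\overline u)$: the variable $a$ has already been eliminated, so only $\pm b_0\overline u$ survives in the exponential, while $R_{p^n}(\pm u+C)$ contributes not an additive character in $u$ but the congruence condition $u\equiv \mp C\pmod{p^{n-1}}$. Over that sparse arithmetic progression Corollary~\ref{coro_koro} has no purchase, and without a further source of cancellation the off-diagonal contribution, bounded trivially, grows like a positive power of $p^n$ when $\Delta\asymp 1$ (already for $k=2$), not like $\Delta^k$. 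If instead you refrain from summing over $a$ and apply Korolev pointwise to one variable, the saving per subinterval of length $p^{n/2}$ is only $(p^n)^{-\gamma_2/8}$ with $\gamma_2=160^{-4}$; for $\Delta$ close to $1$ one must cut $\cI$ into roughly $p^{n/2}$ such pieces, and that loss overwhelms the microscopic gain. A single moment--Korolev argument therefore cannot cover the full range $0<\Delta\leq 1$.

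The paper's proof reflects exactly this limitation. Korolev's bound (via Corollary~\ref{coro_koro}) is invoked only \emph{pointwise in $a$}, with no moment expansion, and only in the narrow critical window $p^{-(n/2+\delta)}\leq t-s\leq p^{-(n/2-\delta)}$ (Lemma~\ref{lemma_t5}). The very small range $t-s\leq p^{-(n/2+\delta)}$ is handled trivially (Lemmas~\ref{lemma_t1} and~\ref{lemma_t4}). For $t-s\geq p^{-(n/2-\delta)}$ a completely different argument is needed (Lemma~\ref{lemma_t3}): the empirical $\alpha$-th moment over $a$ is compared, by the method of \cite[Proposition~4.1]{MR3854900}, to the $\alpha$-th moment of the random series $p^{-n/2}\sum_{|h|\leq (p^n-1)/2}\bigl(\alpha_{p^n}(h;t)-\alpha_{p^n}(h;s)\bigr)U_h$; the latter is bounded by a subgaussian estimate together with Plancherel to give $\ll(t-s)^{\alpha/2}$, and the comparison error $O\bigl(\log^\alpha(p^n)/p^{n/2}\bigr)$ is absorbed using the lower bound on $t-s$. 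You will need this range splitting and the random-model comparison for large $\Delta$; the direct combinatorial route through Korolev alone does not close.
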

The proof of Proposition~\ref{propo_tightne2} is a consequence of the following series of lemmas.
\begin{lemma}\label{lemma_t1}
Let $n\geq 2$ be a fixed integer, $b_0$ be a fixed non-zero integer and $\alpha>0$. If
\[
0\leq t-s\leq\frac{1}{\varphi\left(p^n\right)-1}
\]then
\[
\frac{1}{\varphi\left(p^n\right)}\sum_{a\in\left(\Z/p^n\Z\right)^\times}\left\vert\mathsf{Kl}_{p^n}(t;(a,b_0))-\mathsf{Kl}_{p^n}(s;(a,b_0))\right\vert^\alpha\leq 2^\alpha(t-s)^{\alpha/2}.
\]
\end{lemma}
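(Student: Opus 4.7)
The plan is to establish a deterministic (i.e.\ uniform in $a$) Lipschitz bound on the random path $t\mapsto\mathsf{Kl}_{p^n}(t;(a,b_0))$, and then exploit the shortness of the range $t-s\leq 1/(\varphi(p^n)-1)$ to trade half of the Lipschitz gain for the missing H\"older exponent $\alpha/2$. The key observation is that, in this regime, one expects no Kloosterman cancellation: the statement is purely geometric, reflecting the piecewise-linear nature of the path, and Korolev's estimate (Corollary~\ref{coro_koro}) will only enter in the subsequent lemmas covering the complementary regime $t-s>1/(\varphi(p^n)-1)$.

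More concretely, set $N\coloneqq\varphi(p^n)-1$ and appeal to the parametrization~\eqref{eq_21}. If both $s$ and $t$ lie in the same half-open interval $((j-1)/N,j/N]$, then~\eqref{eq_21} gives
\[
\mathsf{Kl}_{p^n}(t;(a,b_0))-\mathsf{Kl}_{p^n}(s;(a,b_0))=\alpha_j((a,b_0);p^n)(t-s),
\]
and~\eqref{eq_tec_0} yields $|\alpha_j((a,b_0);p^n)|\leq N/p^{n/2}$. If instead $s$ and $t$ fall in two consecutive segments separated by a breakpoint $j_0/N$, a triangle inequality at that breakpoint produces two contributions, each linear on its own segment with slope bounded by $N/p^{n/2}$, and whose parameter-lengths sum to $t-s$. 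Either way, we obtain the Lipschitz bound, \emph{independent of $a$},
\[
|\mathsf{Kl}_{p^n}(t;(a,b_0))-\mathsf{Kl}_{p^n}(s;(a,b_0))|\leq\frac{N}{p^{n/2}}(t-s).
\]

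Raising to the power $\alpha$ and using the hypothesis $t-s\leq 1/N$ to bound one factor, namely $(t-s)^{\alpha/2}\leq N^{-\alpha/2}$, gives
\[
|\mathsf{Kl}_{p^n}(t;(a,b_0))-\mathsf{Kl}_{p^n}(s;(a,b_0))|^\alpha\leq\left(\frac{N}{p^{n/2}}\right)^{\alpha}\cdot\frac{1}{N^{\alpha/2}}(t-s)^{\alpha/2}=\left(\frac{N^{1/2}}{p^{n/2}}\right)^{\alpha}(t-s)^{\alpha/2}.
\]
Since $N=\varphi(p^n)-1<p^n$, the prefactor is bounded by $1$, hence certainly by $2^\alpha$; because the bound is uniform in $a$, the average over $a\in(\Z/p^n\Z)^\times$ is inherited with the same constant, which establishes the lemma.

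The main conceptual point is that there is no real obstacle here: this short-range lemma is the easy companion to the harder intermediate- and long-range estimates later in Section~\ref{sec_tight}, and the whole argument fits in Kolmogorov's tightness criterion only once the three ranges are combined. The only care needed in this step is the splitting at a segment boundary, which costs nothing because the slopes of both adjacent segments obey the same bound~\eqref{eq_tec_0}; in particular, the $2^\alpha$ in the statement is slack and is kept merely for uniformity with the subsequent lemmas.
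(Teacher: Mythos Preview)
Your proof is correct and follows essentially the same route as the paper: both arguments use the piecewise-linear parametrization~\eqref{eq_21}, split into the same-segment and adjacent-segment cases, apply the slope bound~\eqref{eq_tec_0}, and convert the resulting Lipschitz estimate into a H\"older-$1/2$ bound via $t-s\leq 1/N$ together with $N<p^n$. The only cosmetic difference is that the paper first derives the pointwise bound $|\mathsf{Kl}_{p^n}(t;(a,b_0))-\mathsf{Kl}_{p^n}(s;(a,b_0))|\leq 2\sqrt{t-s}$ (via $p^n\leq 4/(t-s)$) and then raises to the power $\alpha$, whereas you raise to $\alpha$ first and then simplify, which even yields the sharper constant $1$ in place of $2^\alpha$.
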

\begin{proof}[\proofname{} of Lemma~\ref{lemma_t1}]%
Note that
\begin{equation}\label{eq_csq1}
p^n\leq\frac{4}{t-s}.
\end{equation}
Let us assume that
\[
\frac{j-1}{\varphi\left(p^n\right)-1}<t\leq\frac{j}{\varphi\left(p^n\right)-1}
\]
where $1\leq j\leq\varphi\left(p^n\right)-1$. Two cases can occur. If
\[
\frac{j-1}{\varphi\left(p^n\right)-1}<s<t\leq\frac{j}{\varphi\left(p^n\right)-1}
\]
then by~\eqref{eq_21}
\begin{align*}
\left\vert\mathsf{Kl}_{p^n}(t;(a,b_0))-\mathsf{Kl}_{p^n}(s;(a,b_0))\right\vert & =\left\vert\alpha_j\left((a,b_0);p^n\right)\right\vert(t-s) \\
& \leq\frac{\varphi\left(p^n\right)-1}{p^{n/2}}(t-s) \\
& \leq 2\sqrt{t-s}
\end{align*}
by~\eqref{eq_csq1} and~\eqref{eq_tec_0}.
If
\[
\frac{j-2}{\varphi\left(p^n\right)-1}\leq s\leq\frac{j-1}{\varphi\left(p^n\right)-1}\leq t\leq\frac{j}{\varphi\left(p^n\right)-1}
\]
where $2\leq j\leq \varphi\left(p^n\right)-1$ then
\begin{multline*}
\left\vert\mathsf{Kl}_{p^n}(t;(a,b_0))-\mathsf{Kl}_{p^n}(s;(a,b_0))\right\vert\leq\left\vert\mathsf{Kl}_{p^n}(t;(a,b_0))-z_j\left((a,b_0);p^n\right)\right\vert \\
+\left\vert z_j\left((a,b_0);p^n\right)-\mathsf{Kl}_{p^n}(s;(a,b_0))\right\vert.
\end{multline*}
The first term is less than
\[
\left\vert\alpha_j\left((a,b_0);p^n\right)\right\vert\left(t-\frac{j-1}{\varphi\left(p^n\right)-1}\right)
\]
whereas the second term is less than
\[
\left\vert\alpha_{j-1}\left((a,b_0);p^n\right)\right\vert\left(\frac{j-1}{\varphi\left(p^n\right)-1}-s\right)
\]
which leads to
\[
\left\vert\mathsf{Kl}_{p^n}(t;(a,b_0))-\mathsf{Kl}_{p^n}(s;(a,b_0))\right\vert\leq 2\sqrt{t-s}
\]
by \eqref{eq_tec_0}.

This ensures the result.
\end{proof}

\begin{lemma}\label{lemma_t2}
Let $n\geq 2$ be a fixed integer, $b_0$ be a fixed non-zero integer and $\alpha\geq 1$. If
\[
t-s\geq\frac{1}{\varphi\left(p^n\right)-1}
\]
then
\begin{multline}\label{eq_lhs}
\frac{1}{\varphi\left(p^n\right)}\sum_{a\in\left(\Z/p^n\Z\right)^\times}\left\vert\mathsf{Kl}_{p^n}(t;(a,b_0))-\mathsf{Kl}_{p^n}(s;(a,b_0))\right\vert^\alpha\ll(t-s)^{\alpha/2} \\
+\frac{1}{\varphi\left(p^n\right)}\sum_{a\in\left(\Z/p^n\Z\right)^\times}\left\vert\widetilde{\mathsf{Kl}_{p^n}}(t;(a,b_0))-\widetilde{\mathsf{Kl}_{p^n}}(s;(a,b_0))\right\vert^\alpha.
\end{multline}
\end{lemma}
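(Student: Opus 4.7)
The plan is to pass from $\mathsf{Kl}_{p^n}$ to its step-function approximation $\widetilde{\mathsf{Kl}_{p^n}}$ via~\eqref{eq_bemol}, at the cost of an error which is negligible in the regime $t-s\geq 1/(\varphi(p^n)-1)$.

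First, I would apply the triangle inequality twice, using~\eqref{eq_bemol} at both $t$ and $s$, to obtain
\[
\bigl|\mathsf{Kl}_{p^n}(t;(a,b_0))-\mathsf{Kl}_{p^n}(s;(a,b_0))\bigr|\leq\bigl|\widetilde{\mathsf{Kl}_{p^n}}(t;(a,b_0))-\widetilde{\mathsf{Kl}_{p^n}}(s;(a,b_0))\bigr|+\frac{12}{p^{n/2}}
\]
uniformly in $a\in(\Z/p^n\Z)^\times$. Since $\alpha\geq 1$, the elementary convexity inequality $(A+B)^\alpha\leq 2^{\alpha-1}(A^\alpha+B^\alpha)$ gives
\[
\bigl|\mathsf{Kl}_{p^n}(t;(a,b_0))-\mathsf{Kl}_{p^n}(s;(a,b_0))\bigr|^\alpha\ll_\alpha \bigl|\widetilde{\mathsf{Kl}_{p^n}}(t;(a,b_0))-\widetilde{\mathsf{Kl}_{p^n}}(s;(a,b_0))\bigr|^\alpha+\frac{1}{p^{n\alpha/2}}.
\]

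Next, I would dispose of the error term $p^{-n\alpha/2}$. The assumption on $t-s$ together with $\varphi(p^n)-1=p^{n-1}(p-1)-1<p^n$ yields
\[
t-s\geq\frac{1}{\varphi(p^n)-1}>\frac{1}{p^n},
\]
so $p^{-n\alpha/2}<(t-s)^{\alpha/2}$. Averaging the pointwise bound above over $a\in(\Z/p^n\Z)^\times$ then produces exactly the right-hand side of~\eqref{eq_lhs}.

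There is no serious obstacle here: the lemma is essentially an observation that in the macroscopic regime $t-s\geq 1/(\varphi(p^n)-1)$, the approximation error $6/p^{n/2}$ from~\eqref{eq_bemol} is dominated by $(t-s)^{1/2}$, so the modulus of continuity of $\mathsf{Kl}_{p^n}$ is controlled, up to the harmless term $(t-s)^{\alpha/2}$, by that of the more tractable step function $\widetilde{\mathsf{Kl}_{p^n}}$. The only point worth noting is that the convexity step needs $\alpha\geq 1$, which is precisely the hypothesis.
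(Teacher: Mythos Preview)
Your proposal is correct and follows essentially the same approach as the paper: apply~\eqref{eq_bemol} at both $t$ and $s$ to pass to $\widetilde{\mathsf{Kl}_{p^n}}$ with an error $O_\alpha(p^{-n\alpha/2})$, then absorb this error into $(t-s)^{\alpha/2}$ using the hypothesis $t-s\geq 1/(\varphi(p^n)-1)$. You are simply more explicit about the convexity step $(A+B)^\alpha\leq 2^{\alpha-1}(A^\alpha+B^\alpha)$ and the comparison $p^{-n}<t-s$, which the paper leaves implicit.
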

\begin{proof}[\proofname{} of Lemma~\ref{lemma_t2}]%
By~\eqref{eq_bemol},
\[
\left\vert\mathsf{Kl}_{p^n}(x;(a,b_0))-\widetilde{\mathsf{Kl}_{p^n}}(x;(a,b_0))\right\vert\leq\frac{6}{p^{n/2}}
\]
for any $0\leq x\leq 1$ and any $a$ in $\left(\Z/p^n\Z\right)^\times$. Thus, the left-hand side of~\eqref{eq_lhs} is bounded by
\[
\frac{1}{\varphi\left(p^n\right)}\sum_{a\in\left(\Z/p^n\Z\right)^\times}\left\vert\widetilde{\mathsf{Kl}_{p^n}}(t;(a,b_0))-\widetilde{\mathsf{Kl}_{p^n}}(s;(a,b_0))\right\vert^\alpha+O_\alpha\left(\frac{1}{p^{n}}\right)^{\alpha/2}
\]
which implies the result by the assumption on $t-s$.
\end{proof}
For $0\leqslant s, t\leqslant 1$, we define
\begin{equation}\label{eq_simp1}
j=\left\lceil{(\varphi\left(p^n\right)-1)s}\right\rceil\quad\text{ and }\quad k=\left\lceil{(\varphi\left(p^n\right)-1)t}\right\rceil
\end{equation}
such that
\begin{equation}\label{eq_simp2}
\frac{j-1}{\varphi\left(p^n\right)-1}<s\leq\frac{j}{\varphi\left(p^n\right)-1}\quad\text{ and }\quad\frac{k-1}{\varphi\left(p^n\right)-1}<t\leq\frac{k}{\varphi\left(p^n\right)-1}.
\end{equation}
These notations will be used in the proofs of Lemma~\ref{lemma_t4}, Lemma~\ref{lemma_t5} and Lemma~\ref{lemma_t3}.
\begin{lemma}\label{lemma_t4}
Let $n\geq 2$ be a fixed integer, $b_0$ be a fixed non-zero integer and $\alpha\geq 1$. If
\[
\frac{1}{\varphi\left(p^n\right)-1}\leq t-s\leq\frac{1}{p^{n/2+\delta}}
\]
where $0<\delta<n/2$ then
\begin{align*}
\frac{1}{\varphi\left(p^n\right)}\sum_{a\in\left(\Z/p^n\Z\right)^\times}\left\vert\mathsf{Kl}_{p^n}(t;(a,b_0))-\mathsf{Kl}_{p^n}(s;(a,b_0))\right\vert^\alpha&\\
\ll(t-s)^{\alpha/2}&+(t-s)^{\alpha\delta/n}.
\end{align*}
\end{lemma}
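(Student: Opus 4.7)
The plan is to combine Lemma~\ref{lemma_t2} with a purely trivial (absolute value) estimate for the short exponential sum that represents the step function difference $\widetilde{\mathsf{Kl}_{p^n}}(t;(a,b_0))-\widetilde{\mathsf{Kl}_{p^n}}(s;(a,b_0))$. The key observation is that, in the narrow range $1/(\varphi(p^n)-1)\leq t-s\leq 1/p^{n/2+\delta}$, the trivial bound on that sum already beats $(t-s)^{\delta/n}$, so no Kloosterman-type cancellation is needed and Corollary~\ref{coro_koro} plays no role here (it will enter in the complementary range treated in the next lemma).

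First I would invoke Lemma~\ref{lemma_t2} to reduce to controlling the $\alpha$-th moment of $\widetilde{\mathsf{Kl}_{p^n}}(t;(a,b_0))-\widetilde{\mathsf{Kl}_{p^n}}(s;(a,b_0))$; the additional term $(t-s)^{\alpha/2}$ already appears as one of the two summands in the target bound. Denoting by $\kappa_1$ and $\kappa_2$ the indices of the length-$1/p^{n-1}$ blocks containing $s$ and $t$, definition \eqref{eq_Kltilde} shows that this difference equals $p^{-n/2}\sideset{}{^\times}\sum_{x\in I}e_{p^n}(ax+b_0\overline{x})$ over the integer interval $I=\bigl(x_{\kappa_1}(s),x_{\kappa_2}(t)\bigr]$. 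A short direct computation using $\varphi(p^n)+p^{n-1}=p^n$ gives
\[
|I|\leq\varphi(p^n)(t-s)+(\kappa_2-\kappa_1)\leq\varphi(p^n)(t-s)+p^{n-1}(t-s)+1=p^n(t-s)+1,
\]
and the lower bound $t-s\geq 1/(\varphi(p^n)-1)$ absorbs the $+1$ to give $|I|\ll p^n(t-s)$.

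The trivial bound $\bigl|\sideset{}{^\times}\sum_{x\in I}e_{p^n}(\cdot)\bigr|\leq |I|$ then yields, uniformly in $a$,
\[
\bigl|\widetilde{\mathsf{Kl}_{p^n}}(t;(a,b_0))-\widetilde{\mathsf{Kl}_{p^n}}(s;(a,b_0))\bigr|\ll p^{n/2}(t-s).
\]
Using the upper bound $t-s\leq 1/p^{n/2+\delta}$ together with $0<\delta<n/2$, one checks that
\[
p^{n/2}(t-s)^{1-\delta/n}\leq p^{n/2-(n/2+\delta)(1-\delta/n)}=p^{-\delta(1/2-\delta/n)}\leq 1,
\]
which rearranges to $p^{n/2}(t-s)\ll(t-s)^{\delta/n}$. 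Raising to the $\alpha$-th power and averaging over $a\in(\Z/p^n\Z)^\times$ therefore bounds the $\widetilde{\mathsf{Kl}_{p^n}}$-contribution by $(t-s)^{\alpha\delta/n}$, and combined with the $(t-s)^{\alpha/2}$ error of Lemma~\ref{lemma_t2} this yields the announced estimate.

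There is no genuine obstacle in this argument: the only mildly delicate point is the elementary verification that $p^{n/2}(t-s)^{1-\delta/n}\leq 1$, whose equivalence to $\delta\leq n/2$ is precisely why the hypothesis on $\delta$ takes that form. In particular no cancellation in the exponential sum is used, so the proof is uniform in $n\geq 2$—the condition $n\geq 31$ will only be needed in the complementary range of $t-s$, where Corollary~\ref{coro_koro} enters.
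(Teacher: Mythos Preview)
Your proposal is correct and follows essentially the same approach as the paper: reduce to $\widetilde{\mathsf{Kl}_{p^n}}$ via Lemma~\ref{lemma_t2}, bound the resulting short sum trivially by the length of the interval $\cI_{s,t}\ll p^n(t-s)$, and convert to $(t-s)^{\delta/n}$. The only cosmetic difference is that the paper first passes through the intermediate bound $8p^{-\delta}$ (using $t-s\le p^{-(n/2+\delta)}$) and then implicitly uses the lower bound $t-s\gg p^{-n}$ to reach $(t-s)^{\delta/n}$, whereas you verify $p^{n/2}(t-s)\le (t-s)^{\delta/n}$ in one step from the upper bound; both routes encode the same inequality.
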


\begin{proof}[\proofname{} of Lemma~\ref{lemma_t4}]%
Recall \eqref{eq_simp1} and \eqref{eq_simp2}. By~\eqref{eq_Kltilde}, one trivially gets
\[
\left\vert\widetilde{\mathsf{Kl}_{p^n}}(t;(a,b_0))-\widetilde{\mathsf{Kl}_{p^n}}(s;(a,b_0))\right\vert\leq\frac{\abs{\cI_{s,t}}}{p^{n/2}}
\]
where $\cI_{s,t}$ is the non-empty interval in $\Z$ given by
\begin{equation}\label{eq_Ist}
\left(x_j(s)=\varphi\left(p^n\right)s+j-1\right.,\left.x_k(t)=\varphi\left(p^n\right)t+k-1\right].
\end{equation} 
Its length satisfies
\begin{equation}\label{eq_Length Ist}
\begin{split}
\abs{\cI_{s,t}} & =\left\lfloor{x_k(t)}\right\rfloor-\left\lceil{x_j(s)}\right\rceil \\
& \leq\varphi\left(p^n\right)(t-s)+\left\lceil{(\varphi\left(p^n\right)-1)t}\right\rceil-\left\lceil{(\varphi\left(p^n\right)-1)s}\right\rceil \\
& \leq 4(\varphi\left(p^n\right)-1)(t-s)+1 \\
& \leq 8(\varphi\left(p^n\right)-1)(t-s)
\end{split}
\end{equation}
since $(\varphi\left(p^n\right)-1)(t-s)\geq 1$. Thus,
\[
\left\vert\widetilde{\mathsf{Kl}_{p^n}}(t;(a,b_0))-\widetilde{\mathsf{Kl}_{p^n}}(s;(a,b_0))\right\vert\leq 8p^{-\delta}.
\]
This implies the desired result by Lemma \ref{lemma_t2}.
\end{proof}


We recall the definitions of the constants $\gamma_1$ and $\gamma_2$ from Proposition~\ref{propo_koro}. 

\begin{lemma}\label{lemma_t5}
Let $n\geq 31$ be a fixed integer, $b_0$ be a fixed non-zero integer and $\alpha\geq 1$. If
\[
\frac{1}{p^{n/2+\delta}}\leq t-s\leq\frac{1}{p^{n/2-\delta}}
\]
where $0<\delta\leq\min{\left(\gamma_2n/16,n/2-15\right)}$ then
\begin{align*}
\frac{1}{\varphi\left(p^n\right)}\sum_{a\in\left(\Z/p^n\Z\right)^\times}\left\vert\mathsf{Kl}_{p^n}(t;(a,b_0))-\mathsf{Kl}_{p^n}(s;(a,b_0))\right\vert^\alpha&\\
\ll(t-s)^{\alpha/2}& +(t-s)^{\alpha\delta/(n/2+\delta)}.
\end{align*}
\end{lemma}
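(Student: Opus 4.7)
The strategy is to combine the step-function reduction of Lemma~\ref{lemma_t2} with Korolev's estimate from Corollary~\ref{coro_koro}. Since $t-s\geq p^{-n/2-\delta}$ and $n/2+\delta<n$ (because $\delta\leq n/2-15<n/2$), one has $t-s\geq 1/(\varphi(p^n)-1)$ for all $p$ large enough, so Lemma~\ref{lemma_t2} applies and reduces the problem to bounding
\[
\frac{1}{\varphi(p^n)}\sum_{a\in(\Z/p^n\Z)^\times}\left|\widetilde{\mathsf{Kl}_{p^n}}(t;(a,b_0))-\widetilde{\mathsf{Kl}_{p^n}}(s;(a,b_0))\right|^\alpha
\]
by a suitable power of $t-s$, the additional $(t-s)^{\alpha/2}$ term in the statement being supplied by Lemma~\ref{lemma_t2} itself.

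Following the argument of Lemma~\ref{lemma_t4}, the difference $\widetilde{\mathsf{Kl}_{p^n}}(t;(a,b_0))-\widetilde{\mathsf{Kl}_{p^n}}(s;(a,b_0))$ equals $p^{-n/2}$ times a Kloosterman sum $\sideset{}{^\times}\sum_{x\in\cI_{s,t}}e_{p^n}(ax+b_0\overline{x})$ over the integers of the interval $\cI_{s,t}$ from~\eqref{eq_Ist} that are coprime with $p$, and by~\eqref{eq_Length Ist} combined with $t-s\leq p^{-n/2+\delta}$, this interval has length
\[
|\cI_{s,t}|\leq 8(\varphi(p^n)-1)(t-s)\leq 8p^{n/2+\delta}.
\]
I would then partition $\cI_{s,t}$ into at most eight consecutive subintervals of length at most $p^{n/2+\delta}$: Corollary~\ref{coro_koro} applies to each subinterval of length at least $p^{n/2-\delta}$ and produces a contribution $\ll(1/p^n)^{\delta/n}=p^{-\delta}$, while the trivial bound $|\cI'|/p^{n/2}\leq p^{-\delta}$ handles the remaining, shorter subintervals. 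Summing yields the uniform estimate
\[
\left|\widetilde{\mathsf{Kl}_{p^n}}(t;(a,b_0))-\widetilde{\mathsf{Kl}_{p^n}}(s;(a,b_0))\right|\ll p^{-\delta}\quad\text{for every } a\in(\Z/p^n\Z)^\times.
\]

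To convert $p^{-\delta}$ into a power of $t-s$, observe that the lower assumption $t-s\geq p^{-n/2-\delta}$ is exactly equivalent to $p^{-\delta}\leq (t-s)^{\delta/(n/2+\delta)}$; raising to the $\alpha$-th power, averaging over $a$, and adding the $(t-s)^{\alpha/2}$ error from Lemma~\ref{lemma_t2} yields the stated bound. The delicate point, and what I regard as the main obstacle, is ensuring that the two hypotheses on $t-s$ are calibrated so that $|\cI_{s,t}|$ lies, up to a bounded factor, inside the window $[p^{n/2-\delta},p^{n/2+\delta}]$ where Korolev's estimate delivers its saving: the upper hypothesis controls $|\cI_{s,t}|$ from above so that Corollary~\ref{coro_koro} is applicable after an $O(1)$-splitting, whereas the lower hypothesis is precisely what makes $p^{-\delta}$ small enough to be absorbed into $(t-s)^{\alpha\delta/(n/2+\delta)}$.
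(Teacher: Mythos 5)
Your proposal is correct and follows essentially the same route as the paper: reduce to the step functions via Lemma~\ref{lemma_t2}, recognize the difference as $p^{-n/2}$ times a short Kloosterman sum over $\cI_{s,t}$, apply Corollary~\ref{coro_koro} to get the uniform bound $\ll p^{-\delta}\ll(t-s)^{\delta/(n/2+\delta)}$, and raise to the power $\alpha$. Your explicit $O(1)$-splitting into at most eight subintervals is in fact a slightly more careful treatment of the bounded constant in $\abs{\cI_{s,t}}\ll p^{n/2+\delta}$ than the paper's direct invocation of the corollary.
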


\begin{proof}[\proofname{} of Lemma~\ref{lemma_t5}]%
Recall \eqref{eq_simp1} and \eqref{eq_simp2}. Once again,
\[
\widetilde{\mathsf{Kl}_{p^n}}(t;(a,b_0))-\widetilde{\mathsf{Kl}_{p^n}}(s;(a,b_0))=\frac{1}{p^{n/2}}\sum_{x\in \cI_{s,t}}e_{p^n}\left(ax+b_0\overline{x}\right)
\]
by~\eqref{eq_Kltilde} for any $a$ in $\left(\Z/p^n\Z\right)^\times$ and where 
$\cI_{s,t}$ is given by~\eqref{eq_Ist}.
According to~\eqref{eq_Length Ist} 
for its length $\abs{\cI_{s,t}}$  we have
\[ 
p^{n/2+\delta} \gg \abs{\cI_{s,t}} \gg p^{n/2-\delta}.
\]

By Corollary~\ref{coro_koro},
\[
\left\vert\widetilde{\mathsf{Kl}_{p^n}}(t;(a,b_0))-\widetilde{\mathsf{Kl}_{p^n}}(s;(a,b_0))\right\vert\ll\left(\frac{1}{p^n}\right)^{\delta/n}\ll(t-s)^{\frac{\delta}{n/2+\delta}}
\]
for any $a$ in $\left(\Z/p^n\Z\right)^\times$, which implies the result.
\end{proof}
\begin{lemma}\label{lemma_t3}
Let $n\geq 2$ be a fixed integer, $b_0$ be a fixed non-zero integer and $\alpha$ be a non-zero even integer. If
\[
\frac{1}{p^{n/2-\delta}}\leq t-s\leq 1
\]
where $0<\delta<n/2$ then
\begin{align*}
\frac{1}{\varphi\left(p^n\right)}\sum_{a\in\left(\Z/p^n\Z\right)^\times}&\left\vert\mathsf{Kl}_{p^n}(t;(a,b_0))-\mathsf{Kl}_{p^n}(s;(a,b_0))\right\vert^\alpha\\
&\qquad \qquad  \ll(t-s)^{\alpha/2}+(t-s)^{1+\delta/(n/2-\delta)-\varepsilon}
\end{align*}
for any $\varepsilon>0$.
\end{lemma}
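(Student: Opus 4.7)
The plan is to first apply Lemma~\ref{lemma_t2}, which reduces the problem to bounding
\[
M_\alpha := \frac{1}{\varphi(p^n)} \sum_{a \in (\Z/p^n\Z)^\times} |S(a)|^{\alpha}, \qquad S(a) := \frac{1}{p^{n/2}} \sideset{}{^\times}\sum_{x \in \cI_{s,t}} e_{p^n}(ax + b_0 \overline x),
\]
modulo an error of size $(t-s)^{\alpha/2}$, where $\cI_{s,t}$ is an interval of integer length $N$ satisfying $p^{n/2+\delta} \ll N \ll \varphi(p^n)$.

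First I would handle the second moment via Parseval (expand $|S(a)|^2$ as a double sum, average over $a$, and use orthogonality of additive characters modulo $p^n$, the Ramanujan sum $c_{p^n}$ appearing in place of the full character sum) to obtain $M_2 \ll N/\varphi(p^n) \ll (t-s)$; this alone settles the case $\alpha=2$. For even $\alpha \geq 4$, I would combine $M_2$ with a uniform pointwise bound on $|S(a)|$ via the interpolation inequality
\[
|S(a)|^{\alpha} \leq \left(\sup_a |S(a)|\right)^{\alpha-2} |S(a)|^2, \qquad\text{so}\qquad M_\alpha \ll \left(\sup_a |S(a)|\right)^{\alpha-2}(t-s).
\]

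For the pointwise bound I would proceed in two regimes. When $N$ does not exceed $p^{n/2+\delta_{\max}}$ (with $\delta_{\max}$ the largest admissible parameter in Corollary~\ref{coro_koro}), I partition $\cI_{s,t}$ into consecutive subintervals of length $\asymp p^{n/2+\delta}$ and apply Corollary~\ref{coro_koro} to each piece (each contributing $\ll p^{-\delta}$ after normalization), controlling the sum by triangle inequality. When $N$ is close to $\varphi(p^n)$, I would instead decompose $S(a) = \mathsf{Kl}_{p^n}(a,b_0) - S^c(a)$ where $S^c(a)$ is the sum over the much shorter complement $\cI_{s,t}^c$, bound $|\mathsf{Kl}_{p^n}(a,b_0)| \leq 2$ by the Weil-type explicit formula, and apply Corollary~\ref{coro_koro} to the complement. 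The conversion from $p^{-\delta}$ to $(t-s)^{\delta/(n/2-\delta)}$ then uses the hypothesis $t-s \geq p^{-n/2+\delta}$, which gives $p^{-1}\leq(t-s)^{1/(n/2-\delta)}$, yielding the second term $(t-s)^{1+\delta/(n/2-\delta)-\varepsilon}$ in the claim.

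The main obstacle is to secure a uniformly good pointwise bound in the intermediate regime where $N$ lies well above $p^{n/2+\delta}$ but well below $\varphi(p^n)$, so that neither the direct partitioning of $\cI_{s,t}$ nor the complement argument gives a clean estimate (a naive triangle inequality over many pieces loses a factor equal to the number of pieces, destroying the saving from Corollary~\ref{coro_koro}). Handling this requires a careful dyadic case analysis in the size of $N$ (equivalently in $t-s$), tailoring the choice of scheme to each dyadic range; the $\varepsilon$ in the final exponent reflects the logarithmic and boundary losses inherent in stitching these cases together.
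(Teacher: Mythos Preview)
Your interpolation strategy has a genuine gap precisely at the ``main obstacle'' you flag, and that gap is not closable by dyadic bookkeeping. In the intermediate regime where $|\cI_{s,t}|$ lies between (say) $p^{n/2+2\delta}$ and $p^{n}-p^{n/2+2\delta}$, there is \emph{no} pointwise bound on $S(a)$ better than the completion bound $|S(a)|\ll\log(p^n)$: partitioning into pieces of length $\asymp p^{n/2}$ produces far more than $p^{\delta}$ pieces, so the triangle inequality wipes out the Korolev saving, and the complement is just as long. Feeding $\sup_a|S(a)|\ll\log(p^n)$ into $M_\alpha\le(\sup_a|S(a)|)^{\alpha-2}M_2$ gives only $M_\alpha\ll(\log p^n)^{\alpha-2}(t-s)\ll(t-s)^{1-\varepsilon}$, which falls short of the required exponent $1+\delta/(n/2-\delta)-\varepsilon$. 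A dyadic decomposition in $N$ cannot manufacture the missing positive exponent, because on each intermediate dyadic block the only input you have is still the $\log$ bound. Note also that your route invokes Corollary~\ref{coro_koro}, which needs $n\ge 31$, whereas the present lemma is stated (and used elsewhere) for all $n\ge 2$.

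The paper's proof avoids pointwise bounds and Korolev entirely. It replaces the average over $a$ by the $\alpha$-th moment of the model variable $\mathsf{Kl}_{p^n}(t;\tfrac{p^n-1}{2};\ast)-\mathsf{Kl}_{p^n}(s;\tfrac{p^n-1}{2};\ast)$ built from the independent $U_h$'s, up to an error $O(\log^\alpha(p^n)/p^{n/2})$ coming from \cite[Proposition~4.1]{MR3854900}; this error is what produces the second term $(t-s)^{1+\delta/(n/2-\delta)-\varepsilon}$. The model moment is then bounded by subgaussianity: the difference is $\sigma_{p^n}$-subgaussian with $\sigma_{p^n}^2=\tfrac{4}{p^n}\sum_h|\alpha_{p^n}(h;t)-\alpha_{p^n}(h;s)|^2=\tfrac{4}{p^n}|\cI_{s,t}|\ll t-s$ by Plancherel, yielding $\mathbb{E}|\cdot|^\alpha\ll(t-s)^{\alpha/2}$ for every even $\alpha$. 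The point is that independence of the $U_h$ lets one bound the higher moments directly, without ever needing $\sup_a|S(a)|$ to be small.
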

\begin{proof}[\proofname{} of Lemma~\ref{lemma_t3}]%
Let us define for any $0\leq x\leq 1$ the random variables
\[
\mathsf{Kl}_{p^n}\left(x;\frac{p^n-1}{2};\ast\right)\coloneqq\frac{1}{p^{n/2}}\sum_{\abs{h}\leq (p^n-1)/2}\alpha_{p^n}(h;x)U_h(\ast).
\] 
Recall that each random variable $U_h$, $h\in\Z$, is $4$-subgaussian since it is centered and bounded by $2$ (see~\cite[Proposition~B.6.2]{Ko}). It turns out that for any real number $u$,
\begin{align*}
\mathbb{E}\left(e^{u\left(\mathsf{Kl}_{p^n}\left(t;\frac{p^n-1}{2};\ast\right)-\mathsf{Kl}_{p^n}\left(s;\frac{p^n-1}{2};\ast\right)\right)}\right) & =\prod_{\abs{h}\leq (p^n-1)/2}\mathbb{E}\left(e^{u\frac{\alpha_{p^n}(h;t)-\alpha_{p^n}(h;s)}{p^{n/2}}U_h(\ast)}\right) \\
& \leq\prod_{\abs{h}\leq (p^n-1)/2}e^{\frac{4\left\vert\alpha_{p^n}(h;t)-\alpha_{p^n}(h;s)\right\vert^2}{p^n}u^2/2}
\end{align*}
by the independence of the random variables $U_h$, $h\in\Z$. Thus, by definition, the random variable $\mathsf{Kl}_{p^n}\left(t;\frac{p^n-1}{2};\ast\right)-\mathsf{Kl}_{p^n}\left(s;\frac{p^n-1}{2};\ast\right)$ is $\sigma_{p^n}$-sub\-gaussian,
 where
\[
\sigma_{p^n}^2=\frac{4}{p^n}\sum_{\abs{h}\leq (p^n-1)/2}\left\vert\alpha_{p^n}(h;t)-\alpha_{p^n}(h;s)\right\vert^2.
\] 
Consequently,
\[
\mathbb{E}\left(\left\vert\mathsf{Kl}_{p^n}\left(t;\frac{p^n-1}{2};\ast\right)-\mathsf{Kl}_{p^n}\left(s;\frac{p^n-1}{2};\ast\right)\right\vert^\alpha\right)\leq c_\alpha\sigma_{p^n}^\alpha
\]
for some positive constant $c_\alpha$ by \cite[Proposition B.6.3]{Ko}.
\par
Recall \eqref{eq_simp1} and \eqref{eq_simp2}. By \eqref{eq_Fourier} and the discrete Plancherel formula,
\[
\sigma_{p^n}^2=\frac{4}{p^n}\abs{\cI_{s,t}}
\]
where $\cI_{s,t}$
is the non-empty interval in $\left(\Z/p^n\Z\right)^\times$ given by~\eqref{eq_Ist}
whose  length satisfies~\eqref{eq_Length Ist}.
Thus, 
\begin{equation}\label{eq_steppp}
\mathbb{E}\left(\left\vert\mathsf{Kl}_{p^n}\left(t;\frac{p^n-1}{2}\ast\right)-\mathsf{Kl}_{p^n}\left(s;\frac{p^n-1}{2};\ast\right)\right\vert^\alpha\right)\leq 32^{\alpha/2}c_\alpha(t-s)^{\alpha/2}.
\end{equation}

The same method of proof than the one used in~\cite[Proposition~4.1]{MR3854900} entails that
\begin{align*}
\frac{1}{\varphi\left(p^n\right)}& \sum_{a\in\left(\Z/p^n\Z\right)^\times}\left\vert\widetilde{\mathsf{Kl}_{p^n}}(t;(a,b_0))-\widetilde{\mathsf{Kl}_{p^n}}(s;(a,b_0))\right\vert^\alpha \\
& =\mathbb{E}\left(\left\vert\mathsf{Kl}_{p^n}\left(t;\frac{p^n-1}{2};\ast\right)-\mathsf{Kl}_{p^n}\left(s;\frac{p^n-1}{2};\ast\right)\right\vert^\alpha\right)+O\left(\frac{\log^\alpha{\left(p^n\right)}}{p^{n/2}}\right)\\
& \ll(t-s)^{\alpha/2}+(t-s)^{1+\delta/(n/2-\delta)-\varepsilon}
\end{align*}
by~\eqref{eq_steppp} for any $\varepsilon>0$. See \cite[Page 322]{MR3854900} for more details.

One can apply Lemma~\ref{lemma_t2} to conclude the proof.
\end{proof}
\begin{proof}[\proofname{} of Proposition~\ref{propo_tightne2}]%
Obviously, one can assume that
\[
0\leq s<t\leq 1.
\]

Let $\delta$ be any real number satisfying
\[
0<\delta\leq\min{\left(\gamma_2n/16,n/2-15\right)}
\]
and $\alpha$ be any even integer satisfying
\[
\alpha>\max{\left(\frac{n}{\delta},\frac{n/2+\delta}{\delta}\right)}.
\] 
Let us define
\[
\beta=\min{\left(\frac{\alpha}{2},\frac{\alpha\delta}{n},2,\frac{\delta\alpha}{n/2+\delta},1+\frac{\delta}{n/2-\delta}\right)}-1>0
\]
and let us prove that
\[
\frac{1}{\varphi\left(p^n\right)}\sum_{a\in\left(\Z/p^n\Z\right)^\times}\left\vert\mathsf{Kl}_{p^n}(t;(a,b_0))-\mathsf{Kl}_{p^n}(s;(a,b_0))\right\vert^\alpha\ll_{\alpha, \delta, n, \varepsilon}\abs{t-s}^{1+\beta-\varepsilon}
\]
for any $\varepsilon>0$.


By Lemmas~\ref{lemma_t1}, \ref{lemma_t4} and~\ref{lemma_t5},
\[
\frac{1}{\varphi\left(p^n\right)}\sum_{a\in\left(\Z/p^n\Z\right)^\times}\left\vert\mathsf{Kl}_{p^n}(t;(a,b_0))-\mathsf{Kl}_{p^n}(s;(a,b_0))\right\vert^\alpha\ll\abs{t-s}^{1+\beta}
\]
provided that
\[
0\leq t-s\leq\frac{1}{p^{n/2-\delta}}.
\]

Let us assume from now on that
\[
\frac{1}{p^{n/2-\delta}}\leq t-s\leq 1.
\]
By Lemma~\ref{lemma_t3},
\begin{multline*}
\frac{1}{\varphi\left(p^n\right)}\sum_{a\in\left(\Z/p^n\Z\right)^\times}\left\vert\mathsf{Kl}_{p^n}(t;(a,b_0))-\mathsf{Kl}_{p^n}(s;(a,b_0))\right\vert^\alpha \\
\ll_\alpha\left((t-s)^{\alpha/2}+(t-s)^{1+\delta/(n/2-\delta)-\epsilon} \right)\ll_\epsilon(t-s)^{1+\beta-\epsilon}.
\end{multline*}
\end{proof}
\section{Proof of Theorems~\ref{theo_A} and~\ref{theo_B}}\label{sec_proofs}%
Theorem~\ref{theo_B} follows from Proposition~\ref{propo_tightne2} by Kolmogorov's criterion for tightness (see~\cite[Proposition~A.1]{MR3854900}).
\begin{remark}
The proof of tightness in \cite[Lemma 3.2]{KoSa} has a small mistake at the end, since the authors use the uniform bound
\begin{equation*}
\widetilde{L}_p(t;\omega)\ll\log{(p)}
\end{equation*}
to uniformize the exponent $\alpha$ across different estimates in order to apply Kolmogorov's criterion for tightness (see~\cite[Proposition~A.1]{MR3854900}). This introduces an overlooked dependency on $p$ when $t-s$ is close to $1$. One can correct this problem either arithmetically (as done in the present paper) by showing that one can take $\alpha$ to be a sufficiently large even integer, or (as suggested by E.~Kowalski) by proving a generalization of Kolmogorov's tightness criterion that involves different exponents $\alpha$ in different ranges.
\end{remark}
Theorem~\ref{theo_A} follows from Theorem~\ref{theo_B} and~\cite[Theorem~A]{MR3854900} by Prokho\-rov's criterion for the convergence in law (see~\cite[Theorem~A.3]{MR3854900}).
\section*{Acknowlegment}
The authors would like to thank E.~Kowalski for sharing with us his enlightening lecture notes~\cite{Ko} and for helping them with Lemma~\ref{lemma_t3} especially when $s-t$ is very close to $1$.

The first and second authors are financed by the ANR Project Flair ANR-17-CE40-0012.

The third author is partially supported by the  Australian Research Council Grant DP170100786.

Last but not least, the authors would like to thank the anonymous referee for a careful reading of the manuscript, for suggesting them Corollary \ref{propo_koro_2} and for pointing out a naive mistake in a previous version of Corollary \ref{coro_koro}. These comments made the manuscript more readable.
\bibliographystyle{alpha}

\end{document}